\newtheorem{propo}{{\bf Proposition}}[section]
\newtheorem{coro}[propo]{{\bf Corollary}}
\newtheorem{lemma}[propo]{{\bf Lemma}} \newtheorem{theor}[propo]{{\bf
Theorem}} 
\newtheorem{defn}{\bf Definition}
\newenvironment{proof}{{\bf Proof.}}{$\Box$}
\begin{document}
\vspace*{1.0in}

\begin{center} Nilpotency, Solvability and Frattini Theory for Poisson algebras

\end{center}
\bigskip

\centerline {David A. Towers} \centerline {Department of
Mathematics, Lancaster University} \centerline {Lancaster LA1 4YF,
England}
\bigskip

\begin{abstract} This paper shows that a Poisson algebra is nilpotent if and only if it is both associative and Lie nilpotent and examines various properties of the nilradical and the solvable radical. It introduces a basic Frattini theory for dialgebras and then investigates a more detailed theory for Poisoon algebras.
\end{abstract}

\noindent {\it Mathematics Subject Classification 2020:} 17A32, 17B05, 17B20, 17B30, 17B50. \\
\noindent {\it Key Words and Phrases:} Poisson algebra, dialgebra, solvable, supersolvable, nilpotent, Frattini ideal.

\section{Introduction}
A dialgebra $(\mathcal{A}, \cdot, [\cdot, \cdot])$ is a vector space $\mathcal{A}$ endowed with two multiplications $\cdot$ and $[\cdot, \cdot]$ that are not necessarily associative. Some popular classes of dialgebras are Lie-Yamaguti algebras, Gerstenhaber algebras, Nambu-Poisson algebras, Novikov-Poisson algebras, Gelfand-Dorfman algebras, and many others, including Poisson algebras, which are the main objects of study of this paper.

\begin{defn}\rm
A {Poisson algebra} $\mathcal{P}$ is a dialgebra $(\mathcal{P}, \cdot, [\cdot, \cdot])$ such that $\mathcal{P}_A:=(\mathcal{P}, \cdot)$ is an associative commutative algebra, $\mathcal{P}_L:=(\mathcal{P}, [\cdot, \cdot])$ is a Lie algebra and they satisfy the compatibility identity for $x,y,z\in \mathcal{P}$ given by
\begin{equation*}
[x\cdot y, z]=[x,z]\cdot y + x\cdot[y,z] \quad \quad \textrm{(Leibniz rule).}
\end{equation*}
\end{defn}

The significance of Poisson algebras lies in their ability to model the dynamics of classical mechanical systems. They are instrumental in the formulation of Hamiltonian mechanics, where the Poisson bracket defines the evolution of observables over time. This makes Poisson algebras indispensable in the study of integrable systems, symplectic geometry, and the theory of deformations.

Beyond classical mechanics, Poisson algebras find applications in quantum mechanics through the process of quantization. They serve as a bridge between classical and quantum theories, facilitating the transition from Poisson brackets to commutators in quantum algebras. This connection is pivotal in understanding the underlying algebraic structures of quantum field theory and string theory.

Moreover, Poisson algebras are utilized in various branches of mathematics, including algebraic geometry and representation theory. They provide tools for exploring the geometry of algebraic varieties and the symmetries of differential equations. In computational mathematics, Poisson algebras are employed in algorithms for solving polynomial equations and in the analysis of dynamical systems.  For more detailed information on  some of these studies see \cite{bv}, \cite{cfm}, \cite{drin},\cite{huch}, \cite{kont},  \cite{rs} and the references contained therein.

There have also been studies of the purely algebraic structure of Poisson algebras: see, for instance,  \cite{gr}, \cite{mr}, \cite{ont}, \cite{su}. This paper is a further contribution to this latter study. 

Every algebra in this paper is finite-dimensional.  All the vector spaces, algebras and linear maps are considered over an arbitrary field $\mathbb{F}$, unless we say otherwise. The notation `$\subseteq$' will denote inclusion, whilst `$\subset$' indicates strict inclusion. We will use `$\dot{+}$' to denote a direct sum of the underlying vector space structure and `$\oplus$' for an algebra direct sum.  Let us recall some definitions.

\begin{defn}
A {subalgebra}  of a dialgebra $\mathcal{A}$ is a linear subspace $A$ closed by both multiplications, that is $A \cdot A + [A,A]    \subset A$.
A subalgebra $I$ of $%
\mathcal{A}$ is  an {ideal } if $I \cdot \mathcal{A} + \mathcal{A}\cdot I +[I,\mathcal{A}] + [\mathcal{A}, I]    \subset I$. A zero subalgebra (or ideal) is a subalgebra (or ideal)  $A$ such that $A \cdot A + [A,A]    = 0$. 
\end{defn}

Given two dialgebras  $\mathcal{A}_{1}$ and $\mathcal{A}
_{2}$, a {homomorphism} is a linear map $\phi :\mathcal{A}_{1}\longrightarrow \mathcal{A}_{2}$ satisfying, for all $%
x,y  \in\mathcal{A}_{1}$, that $\phi \left(  x \cdot_{\mathcal{A}_{1}} y \right) =  \phi \left(
x\right)\cdot_{\mathcal{A}_{2}}   \phi \left( y\right)  $  and
$\phi \left( [x,y]_{\mathcal{A}_{1}} \right) =[ \phi \left(
x\right) ,\phi \left( y\right) ]_{\mathcal{A}_{2}} $. If $\phi :\mathcal{A}_{1}\longrightarrow
\mathcal{A}_{2}$ is a homomorphism of dialgebras, then $\ker \phi $ is an ideal
of $\mathcal{A}_{1}$ and $\phi \left( \mathcal{A}_{1}\right) \cong \mathcal{A
}_{1}/\ker \phi $.

 If $I$ is an ideal of a dialgebra $\mathcal{A}$,  the quotient vector space $\overline{\mathcal{A}}=\mathcal{A}/I$  together with the  operations:
\begin{equation*}
(x+I) \cdot (y+I):= x \cdot y
+I,
\end{equation*}
\begin{equation*}
[ x+I,y+I ]:=[ x ,y  ]
+I,
\end{equation*}
for all $x,y \in \mathcal{A}$, is called the quotient dialgebra.
The mapping $\mathcal{A} \longrightarrow  \mathcal{A}/I$ defined by  $x \longmapsto  x + I$ is
a homomorphism  of dialgebras with kernel $I$.

{Given a dialgebra $(\mathcal{A}, \cdot, [\cdot, \cdot])$, recall the derived
sequence of subspaces. For $n \geq 0$,  define
$$\mathcal{A}^{(0)}:=\mathcal{A} \quad \quad \quad \mathcal{A}^{(n+1)} =  \mathcal{A}^{(n)}\cdot\mathcal{A}^{(n)} + [\mathcal{A}^{(n)}, \mathcal{A}^{(n)}].$$

\begin{defn} 
\rm
A dialgebra $(\mathcal{A}, \cdot, [\cdot, \cdot])$ is {solvable} if there exist $m\geq 0$ such that $\mathcal{A}^{(m) }=0$. We will call $\mathcal{A}$ supersolvable if there is a flag $$0=\mathcal{A}_0\subset \mathcal{A}_1\subset \ldots \subset \mathcal{A}_n=\mathcal{A},$$ where $\mathcal{A}_i$ is an $i$-dimensional ideal of $\mathcal{A}$ for $1\leq i\leq n$.
\end{defn}

Moreover, the lower central series is the sequence
$$\mathcal{A}^1:=\mathcal{A} \quad \quad \quad \mathcal{A}^{n+1} = \sum_{i=1}^{n}( \mathcal{A}^{i}\cdot\mathcal{A}^{n+1-i} + [\mathcal{A}^{i}, \mathcal{A}^{n+1-i}]).$$

\begin{defn} 
\rm
A dialgebra $(\mathcal{A}, \cdot, [\cdot, \cdot])$ is {nilpotent} if there exists $m\geq 0$ such that $\mathcal{A}^m=0$. It is a zero algebra if $\mathcal{A}^2=0$. 
\end{defn}

\begin{defn}
\rm
If $B$ is a subalgebra of a dialgebra $\mathcal{A}$, the annihilator of $B$ in $\mathcal{A}$, $Ann_\mathcal{A}(B)$, is defined by $$Ann_\mathcal{A}(B)=\{x\in \mathcal{A} \mid x\cdot B+B\cdot x+[x,B]+[B,x]=0\}$$. If $B$ is an ideal of $\mathcal{A}$ it is easy to check that $Ann_\mathcal{A}(B)$ is also an ideal of $\mathcal{A}$. The centre of $\mathcal{A}$, $Z(\mathcal{A})$, is equal to $Ann_\mathcal{A}(\mathcal{A})$.
\end{defn}

Note that if a dialgebra is solvable (resp. nilpotent), then each of the multiplications is solvable (resp. nilpotent). Also, if $\mathcal{A}$ is a Poisson algebra, then we have 
$
\mathcal{A}^{n+1}=  \mathcal{A}^n  \cdot  \mathcal{A}   +
[ \mathcal{A}^n ,  \mathcal{A}  ]$. 

\begin{defn} For a Poisson algebra $\mathcal{P}$ we define the series $$\mathcal{P}_A^{(0)}=\mathcal{P}, \quad \mathcal{P}_A^{(n+1}=\mathcal{P}_A^{(n)}\cdot \mathcal{P}_A^{(n)}\quad \hbox{and}\quad \mathcal{P}_L^{(0)}=\mathcal{P} \quad \mathcal{P}_L^{(n+1)}=[\mathcal{P}_L^{(n)},\mathcal{P}_L^{(n)}].$$ Also,
$$\mathcal{P}_A^1=\mathcal{P}, \quad \mathcal{P}_A^{n+1}=\mathcal{P}_A^n\cdot \mathcal{P}\quad \hbox{and} \quad \mathcal{P}_L^1=\mathcal{P}, \quad \mathcal{P}_L^{n+1}=[\mathcal{P}_L^n,\mathcal{P}].$$ We say that $\mathcal{P}$ is associative (respectively, Lie) solvable if there exists $m > 0$ such that $\mathcal{P}_A^{(m)}=0$ (respectively, $\mathcal{P}_L^{(m)}=0$) . Similarly, we can define associative and Lie nilpotency.
\end{defn}

{\bf Note} When speaking of an `ideal' of a Poisson algebra we will mean a Poisson ideal. We should also point out that our definitions and notation are not used universally in the literature.
\medskip

In section two we collect together some preliminary results. First we show that a Poisson algebra is nilpotent if and only if it is both associastive and Lie nilpotent, thereby establishing that the nilradical exists. We then show that if $R$ is the solvable radical of a Poisoon algebra, then $R_A^2$ is nilpotent, and if the field has characteristic zero, $R^2$ is nilpotent. Finally, we introduce Engel subalgebras and the subalgebras  $S_{\mathcal{P}}(a,\mathbb{F})$ of a Poisson algebra.
\par

In section 3 we define the Frattini subalgebra and ideal of a dialgebra, and collect together some basic results concerning them which mirror the corresponding concepts for an algebra.
\par

The final section contains the main results concerning the Frattini ideal of a Poisson algebra. the first three mirror the corresponding results for other classes of algebra. We go on to study Poisson algebras with trivial Frattini ideal, showing, in particular that for such a solvable algebra the associative multiplication is trivial. We finish by classifying all Poisson algebras in which every maximal subalgebra is an ideal.

\section{Preliminary results}
\begin{lemma}\label{1} Let $B,C$ be subalgebras of the Poisson algebra $\mathcal{P}$. Then $$[B_A^n,C]\subseteq B_A^{n-1}\cdot [B,C] \hbox{ for all } n\geq1.$$
\end{lemma}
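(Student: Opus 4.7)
The plan is to induct on $n$, with the Leibniz rule doing the heavy lifting at each step and commutativity/associativity of the associative product being used to re-order factors at the end.

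For the base case $n=1$ the statement reduces to $[B,C]\subseteq [B,C]$ (interpreting $B_A^{0}\cdot[B,C]$ as $[B,C]$), so it is vacuous. It is also instructive to write out $n=2$: for $b_1,b_2\in B$ and $c\in C$, the Leibniz rule gives
\begin{equation*}
[b_1\cdot b_2,c]=[b_1,c]\cdot b_2+b_1\cdot [b_2,c]\in B\cdot [B,C],
\end{equation*}
since both summands lie in $B_A^1\cdot [B,C]$ (using commutativity of $\cdot$ on the first term).

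For the inductive step, assume the claim for $n-1$, i.e.\ $[B_A^{n-1},C]\subseteq B_A^{n-2}\cdot [B,C]$, and take a typical generator $x\cdot b\in B_A^{n-1}\cdot B=B_A^n$ with $x\in B_A^{n-1}$, $b\in B$, and any $c\in C$. By the Leibniz rule,
\begin{equation*}
[x\cdot b,c]=[x,c]\cdot b+x\cdot [b,c].
\end{equation*}
The second summand belongs to $B_A^{n-1}\cdot [B,C]$ directly. For the first summand, the inductive hypothesis places $[x,c]$ in $B_A^{n-2}\cdot [B,C]$, so $[x,c]\cdot b\in B_A^{n-2}\cdot [B,C]\cdot B$. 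Using commutativity and associativity of $\cdot$ (part of the Poisson data), this equals $B_A^{n-2}\cdot B\cdot [B,C]=B_A^{n-1}\cdot [B,C]$. Both summands therefore lie in $B_A^{n-1}\cdot [B,C]$, completing the induction.

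The main obstacle, as such, is only notational: one has to be careful that $[x,c]\cdot b$ really can be re-written as an element of $B_A^{n-1}\cdot [B,C]$ rather than $B_A^{n-2}\cdot [B,C]\cdot B$, which is where the commutativity of the associative product $\cdot$ is genuinely used. No subtlety about the characteristic of $\mathbb{F}$ or any finite-dimensional hypothesis is needed; the argument is a direct consequence of the Leibniz compatibility together with the commutative-associative structure on $\mathcal{P}_A$.
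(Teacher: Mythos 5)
Your proof is correct and follows essentially the same route as the paper's: induction on $n$, one application of the Leibniz rule to $[B_A^{n-1}\cdot B,C]$, with one summand landing directly in the target and the other handled by the inductive hypothesis together with commutativity and associativity of $\cdot$. The only difference is that you spell out the base case and the reassociation step, which the paper leaves implicit.
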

\begin{proof} Suppose the result holds for $n=k$. Then $$[B_A^{k+1},C]=[B_A^k\cdot B,C]\subseteq B_A^k\cdot[B,C]+B\cdot [B_A^k,C]\subseteq B_A^k\cdot [B,C].$$
\end{proof}

\begin{lemma}\label{2} If $B, C$ are ideals of the Poisson algebra $\mathcal{P}$. then so is $B\cdot C$.
\end{lemma}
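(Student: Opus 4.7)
The plan is to verify directly that $B\cdot C$ (the subspace spanned by products $b\cdot c$ with $b\in B$, $c\in C$) absorbs both multiplications from $\mathcal{P}$. It is enough to check this on a generating product $b\cdot c$ and then extend by linearity; closure of $B\cdot C$ under its own two multiplications then follows automatically from the ideal property with respect to $\mathcal{P}$.

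For the associative multiplication, given $x\in\mathcal{P}$, associativity gives $x\cdot(b\cdot c)=(x\cdot b)\cdot c$, and $x\cdot b\in B$ because $B$ is an ideal, so $x\cdot(b\cdot c)\in B\cdot C$. Commutativity of $\cdot$ handles the other side. For the Lie bracket, I would invoke the Leibniz identity: from $[b\cdot c,x]=[b,x]\cdot c+b\cdot[c,x]$ one obtains
\[
[x,b\cdot c]=[x,b]\cdot c+b\cdot[x,c].
\]
Since $B$ and $C$ are ideals, $[x,b]\in B$ and $[x,c]\in C$, so both summands lie in $B\cdot C$, whence $[x,b\cdot c]\in B\cdot C$ and similarly for $[b\cdot c,x]$.

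There is no real obstacle; the only point requiring care is making sure that the Leibniz rule is applied in the correct orientation so that the ideal memberships $[x,b]\in B$ and $[x,c]\in C$ can be read off directly. The argument uses nothing beyond the defining axioms of a Poisson algebra (associativity, commutativity, and the Leibniz rule) together with the fact that $B$ and $C$ are ideals.
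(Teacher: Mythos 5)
Your proposal is correct and follows essentially the same route as the paper: absorption under $\cdot$ via associativity and the ideal property of $B$, and absorption under $[\cdot,\cdot]$ via the Leibniz rule, which yields $[B\cdot C,\mathcal{P}]\subseteq [B,\mathcal{P}]\cdot C+B\cdot[C,\mathcal{P}]\subseteq B\cdot C$. Nothing further is needed.
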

\begin{proof} It is clear that $B\cdot C \cdot \mathcal{P}\subseteq B\cdot C$. Also, $$[B\cdot C,\mathcal{P}]\subseteq B\cdot [C,\mathcal{P}]+C\cdot [B,{\mathcal P}]\subseteq B\cdot C,$$ whence the result.
\end{proof}

\begin{lemma}\label{ann} Let $B$ be a minimal ideal of a Poisson algebra $\mathcal{P}$, let $N$ be an associative and Lie nilpotent ideal of $\mathcal{P}$, and suppose that $B\subseteq N$. Then $B\subseteq Ann_\mathcal{P}(N)$.
\end{lemma}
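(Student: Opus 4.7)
The plan is to apply the minimality of $B$ (which I take to be nonzero, as usual) twice: first to force $B \cdot N = 0$, and then to force $[B, N] = 0$. Combined with commutativity of $\cdot$ and antisymmetry of $[\,,\,]$, these two identities immediately give $B \subseteq Ann_\mathcal{P}(N)$, since both $N \cdot B$ and $[N, B]$ will vanish as well.

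For the first step I would note that by Lemma \ref{2}, $B \cdot N$ is a Poisson ideal of $\mathcal{P}$, and since $B$ is an ideal it lies in $B$. Minimality therefore forces $B \cdot N \in \{0, B\}$. If $B \cdot N = B$, then iterating with $B \subseteq N$ gives $B = B \cdot N \subseteq N \cdot N = N_A^2$, and inductively $B \subseteq N_A^i$ for every $i \geq 1$; this contradicts the associative nilpotency of $N$. Hence $B \cdot N = 0$.

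With $B \cdot N = 0$ in hand, the next task is to verify that $[B, N]$ is itself a Poisson ideal of $\mathcal{P}$. Closure under the Lie action comes from Jacobi: $[[b, n], p] = [b, [n, p]] + [[b, p], n] \in [B, N]$, since $B$ and $N$ are both ideals. Closure under the associative action uses Leibniz, $[b, n] \cdot p = [b \cdot p, n] - b \cdot [p, n]$, where the first summand lies in $[B, N]$ and the second in $B \cdot N = 0$. So $[B, N]$ is an ideal of $\mathcal{P}$ contained in $B$, and minimality again forces $[B, N] \in \{0, B\}$. The case $[B, N] = B$ is excluded by the same iteration trick, now invoking Lie nilpotency: $B = [B, N] \subseteq [N, N] = N_L^2$, and inductively $B \subseteq N_L^i$ for all $i$, forcing $B = 0$. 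Thus $[B, N] = 0$ and the result follows.

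The only non-routine point will be verifying that $[B, N]$ becomes a Poisson ideal of $\mathcal{P}$ once $B \cdot N$ has been killed; this is the single place where the Leibniz compatibility between the two products is essential, and it is precisely what allows the two nilpotency hypotheses to be exploited independently via a clean minimality argument.
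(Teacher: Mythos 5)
Your proposal is correct and follows essentially the same route as the paper: use Lemma \ref{2} and minimality plus associative nilpotency to get $B\cdot N=0$, then use that vanishing together with the Leibniz rule to show $[B,N]$ is an ideal, and kill it by minimality plus Lie nilpotency. Your write-up merely makes explicit a few details the paper leaves terse (the iteration argument ruling out $B\cdot N=B$, and the Jacobi/Leibniz verification that $[B,N]$ is an ideal).
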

\begin{proof} We have that  $B\cdot N$ is an ideal of $\mathcal{P}$, by Lemma \ref{2}. From the minimality we have that $B\cdot N=B$ or $B\cdot N=0$. The former is impossible, by the associative nilpotency of $N$, so $B\cdot N=0$.. But then $$[B,N]\cdot \mathcal{P} +[[B,N],\mathcal{P}]\subseteq [B\cdot N,\mathcal{P}]+B\cdot[N,\mathcal{P}]+[B,N]\subseteq [B,N],$$ so $[B,N]$ is an ideal of $\mathcal{P}$. A similar argument using the Lie nilpotency of $N$ shows that $[B,N]=0$. Hence  $B\subseteq Ann_\mathcal{P}(N)$.
\end{proof}

\begin{propo}\label{nil} A Poisson algebra $\mathcal{P}$ is nilpotent if and only if it is both associative and Lie nilpotent.
\end{propo}
\begin{proof} Suppose first that $\mathcal{P}$ is nilpotent. Then $\mathcal{P}_A^n, \mathcal{P}_L^n\subseteq \mathcal{P}^n$ for all $n\geq 1$, so $\mathcal{P}$ is both associative and Lie nilpotent. 
\par

Suppose next that $\mathcal{P}$ is both associative and Lie nilpotent. We show that $\mathcal{P}$ is nilpotent by induction on its dimension. If $\dim \mathcal{P}=1$, it is a zero algebra and so is nilpotent. So suppose that the result holds when $\dim \mathcal{P}\leq k$ and let $\mathcal{P}$ have dimension $k+1$.
\par

Let $A$ be a minimal ideal of $\mathcal{P}$. Then $\dim \mathcal{P}/A\leq k$, so $\mathcal{P}^m\subseteq A$ for some $m$. But now $$ \mathcal{P}^{m+1}=\mathcal{P}^m\cdot \mathcal{P}+[\mathcal{P}^m,\mathcal{P}]\subseteq A\cdot \mathcal{P}+[A,\mathcal{P}]=0,$$ by Lemma \ref{ann}. Hence $\mathcal{P}$ is nilpotent.
\end{proof}

\begin{defn} If $B$ and $C$ are ideals of a Poisson algebra $\mathcal{P}$ which are both associative and Lie nilpotent, then $B+C$ is another such ideal. There is thus a maximal associative and Lie nilpotent ideal $N(\mathcal{P})$ that we will call the nilradical of $\mathcal{P}$. It is clear from Proposition \ref{nil} that $N(\mathcal{P})$ is the maximal nilpotent ideal of $\mathcal{P}$. Similarly, there is a maximal solvable ideal of ${\mathcal P}$, $R({\mathcal P})$, which we call the radical of ${\mathcal P}$
\end{defn}

\begin{theor}\label{square} If ${\mathcal P}$ is a Poisson algebra with solvable radical $R$, then $R_A^2\subseteq N({\mathcal P})$.
\end{theor}
\begin{proof} (i) Clearly $R_A^2$ is associative nilpotent. We show that it is Lie nilpotent by induction on $\dim R$. It clearly holds if $\dim R=1$, Suppose it holds whenever $\dim R<k$ ($k\geq 2$) and let $\dim R=k$. The result certainly holds if $R_A^2=0$, so suppose that $R_A^2\neq 0$. Then there is a minimal idea $B$ of ${\mathcal P}$ inside $R_A^2$. Now $R_A^2/B=(R_A/B)^2$ is Lie nilpotent, by the inductive hypothesis.
\par

Clearly $R\cdot B=0$, since $R\cdot B$ is an ideal of ${\mathcal P}$ and $R$ is associative nilpotent. Moreover, $[R_A^2,B]\subseteq R\cdot[R,B]\subseteq R\cdot B=0$. It follows that $R_A^2$ is Lie nilpotent, and thus is nilpotent, by Proposition \ref{nil}.
\end{proof}

\begin{coro}\label{csquare}  Let ${\mathcal P}$ be a Poisson algebra over a field $\mathbb{F}$ of characteristic zero. Then $R^2$ is nilpotent.
\end{coro}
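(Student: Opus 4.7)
The plan is to verify the hypotheses of Proposition \ref{nil} for the Poisson subalgebra $R^2$, that is, to show $R^2$ is both associative and Lie nilpotent. Since $\mathcal{P}$ is Poisson we have the expansion
\[
R^2 = R\cdot R + [R,R] = R_A^2 + [R,R],
\]
and both summands will be treated separately.

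For associative nilpotency, I would note that solvability of $R$ as a Poisson algebra forces $R_A$ to be associative solvable. Because $R_A$ is commutative, its derived series reads $R_A^{(n)} = R_A^{2^n}$, so associative solvability is the same as associative nilpotency. Hence $R$ itself is associative nilpotent, and $R^2$ inherits this.

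For Lie nilpotency the characteristic-zero hypothesis is used. Theorem \ref{square} already gives that $R_A^2$ is nilpotent as a Poisson algebra, hence Lie nilpotent. Since $R$ is solvable, the Lie algebra $R_L$ is solvable, and the classical theorem on solvable Lie algebras in characteristic zero implies that $[R,R]$ is Lie nilpotent. Next I would check that $R_A^2$ and $[R,R]$ are Lie ideals of $R_L$: for $R_A^2$ this is a one-line calculation from the Leibniz rule (equivalently, the case $B=C=R$ of Lemma \ref{1}), while for $[R,R]$ it is the Jacobi identity. Finally, by the standard result that the sum of two nilpotent Lie ideals of a Lie algebra is nilpotent, $R^2 = R_A^2 + [R,R]$ is Lie nilpotent.

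Having shown $R^2$ is associative and Lie nilpotent, Proposition \ref{nil} gives that $R^2$ is nilpotent as a Poisson algebra. The step I expect to require the most care is invoking the characteristic-zero Lie-algebra theorem on the nilpotency of the derived algebra of a solvable Lie algebra, since this is precisely where the hypothesis on $\mathbb{F}$ is used; everything else is formal bookkeeping involving results already in hand.
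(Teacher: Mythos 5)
Your proposal is correct and follows essentially the same route as the paper: decompose $R^2=R_A^2+[R,R]$, get Lie nilpotency of $R_A^2$ from Theorem \ref{square}, get Lie nilpotency of $[R,R]$ from the characteristic-zero theorem on derived algebras of solvable Lie algebras, sum the two nilpotent Lie ideals, note associative nilpotency, and conclude via Proposition \ref{nil}. You simply spell out the ideal-verification and the commutative-associative ``solvable implies nilpotent'' step that the paper leaves implicit.
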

\begin{proof} Since $[R,R]$ and $R_A^2$ are Lie nilpotent Lie ideals of ${\mathcal P}$, $R^2$ is Lie nilpotent. As it is also associative nilpotent, it is nilpotent. The result follows.
\end{proof}

\begin{propo}\label{sup} Let $\mathcal{P}$ be a supersolvable Poisson algebra. Then $\mathcal{P}^2$ is nilpotent.
\end{propo}
\begin{proof} Clearly, $\mathcal{P}^2$ is associative nilpotent. Moreover, $\mathcal{P}_L$ is a supersolvable Lie algebra, so $[\mathcal{P},\mathcal{P}]$ is Lie nilpotent. But $\mathcal{P}_A^2$ is also Lie nilpotent, by Theorem \ref{square}, so $\mathcal{P}^2$ is Lie nilpotent, and thus nilpotent, by Proositionn \ref{nil}.
\end{proof}

\begin{lemma}\label{annrad} Let $\mathcal{P}$ be a Poisson algebra with radical $R$ and nilradical $N$. Then $Ann_R(N)\subseteq N$.
\end{lemma}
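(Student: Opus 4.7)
Plan: I aim to show that $A := Ann_R(N)$ is itself a nilpotent ideal of $\mathcal{P}$, which by maximality of $N$ forces $A\subseteq N$. First, $A$ is a $\mathcal{P}$-ideal, being the intersection of the two $\mathcal{P}$-ideals $R$ and $Ann_\mathcal{P}(N)$ (the latter is an ideal because $N$ is). By the defining property of $A$, both $A\cdot N=0$ and $[A,N]=0$.

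Since $A\subseteq R$, Theorem \ref{square} gives $A\cdot A\subseteq R_A^2\subseteq N$; combined with $A\cdot N=0$, this yields $A_A^3=(A\cdot A)\cdot A\subseteq N\cdot A=0$, so $A$ is associative nilpotent. By Proposition \ref{nil} it then remains to verify that $A$ is Lie nilpotent, and the route I would take is via the intermediate claim $[A,A]\subseteq N$. Granting this claim, $A^2=A\cdot A+[A,A]\subseteq N$, whence
\begin{equation*}
A^3 \;=\; A^2\cdot A + [A^2,A] \;\subseteq\; N\cdot A + [N,A] \;=\; 0,
\end{equation*}
so $A$ is Poisson nilpotent of class at most three, and $A\subseteq N$ by maximality of the nilradical.

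Two elementary observations anchor the intermediate claim. Jacobi together with $[A,N]=0$ yields $[[A,A],N]=0$, and Leibniz applied to $[x\cdot n,y]$ with $x,y\in A$, $n\in N$ (using $A\cdot N=0$ and $[A,N]=0$) yields $[A,A]\cdot N=0$; together these place $[A,A]$ inside $Ann_\mathcal{P}(N)\cap R=A$. The principal obstacle is to push this further to $[A,A]\subseteq N$. In characteristic zero this is immediate from Corollary \ref{csquare}, since $[A,A]\subseteq[R,R]\subseteq R^2\subseteq N$. In the general setting I would argue by induction on $\dim\mathcal{P}$: if $A\neq 0$, take a minimal $\mathcal{P}$-ideal $B\subseteq A$; then $B\subseteq R$ is solvable and $B^2$ is a proper $\mathcal{P}$-ideal of $B$, so minimality forces $B^2=0$ and hence $B\subseteq N$. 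One then passes to $\mathcal{P}/B$ and invokes the inductive hypothesis, the subtlety being that the argument requires adequate control of the nilradical under this quotient (e.g., that $N(\mathcal{P}/B)=N/B$ for the specific $B$ chosen).
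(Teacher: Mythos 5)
Your reduction is sound up to one point, and you have correctly isolated that point yourself: everything hinges on the containment $[A,A]\subseteq N$ for $A=Ann_R(N)$, and this is precisely what you do not prove outside characteristic zero. The shortcut via Corollary \ref{csquare} is fine when $\mathbb{F}$ has characteristic zero, but the induction you sketch for the general case does not close. After passing to $\mathcal{P}/B$ the nilradical can grow strictly (already for the two-dimensional nonabelian Lie algebra, viewed as a Poisson algebra with zero associative multiplication, $N(\mathcal{P})$ is one-dimensional while $N(\mathcal{P}/N(\mathcal{P}))$ is the whole quotient), and an element of $R$ that annihilates $N$ need not annihilate the possibly larger $N(\mathcal{P}/B)$ modulo $B$; so the inductive hypothesis applied to $\mathcal{P}/B$ does not return the statement you need about $Ann_R(N)$. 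As written, your argument is complete only in characteristic zero.

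The paper sidesteps this obstacle by not attempting to prove that the whole of $Ann_R(N)$ is nilpotent. It argues by contradiction: if $Ann_R(N)\not\subseteq N$, choose a minimal ideal $A/N$ of $\mathcal{P}/N$ inside $(Ann_R(N)+N)/N$. Since $A/N$ lies in the solvable $R/N$ and $(A/N)^2$ is an ideal of $\mathcal{P}/N$ contained in $A/N$, minimality forces $(A/N)^2=0$, that is, $A^2\subseteq N$ \emph{including the Lie part} --- exactly the containment you were missing, obtained for free from minimality rather than from the structure theory of $R$. Writing $A=(A\cap Ann_R(N))+N$ by the modular law, the annihilation relations $x\cdot N=[x,N]=0$ for $x\in A\cap Ann_R(N)$ then give $A^{k+2}\subseteq N^{k+1}$ by induction, so $A$ inherits nilpotency from $N$ and $A\subseteq N$, a contradiction. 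If you replace your global claim $[Ann_R(N),Ann_R(N)]\subseteq N$ by this local one, the rest of your computation goes through essentially verbatim; your observations that $A\cdot A\subseteq R_A^2\subseteq N$ (Theorem \ref{square}) and that $[[A,A],N]=0$ and $[A,A]\cdot N=0$ are correct but are not needed for the paper's route.
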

\begin{proof} Suppose that $Ann_R(N)\not \subseteq N$. Let $A/N$ be a minimal ideal of $\mathcal{P}/N$ inside $(Ann_R(N)+N)/N$. Then $A/N$ is a zero ideal and $A$ is a nilpotent ideal of $\mathcal{P}$, whence $A\subseteq N$, a contradiction. The result follows.
\end{proof}

\begin{defn} For a Poisson algebra $\mathcal{P}$, we denote by $P_x$ and $Q_x$ the maps in $\textrm{End}(\mathcal{P})$ given by $P_x(y) = x\cdot y$ and $Q_x(y) = [x, y]$, for $x, y \in \mathcal{P}$. For $a\in {\mathcal P}$, put $E_{\mathcal P}^A(a)=\{x\in {\mathcal P}\mid P_a^n(x)=0$ for some $n\}$ and $E_{\mathcal P}^L(a)=\{x\in {\mathcal P}\mid Q_a^n(x)=0$ for some $n\}$.
\end{defn}

\begin{lemma} If ${\mathcal P}$ is a Poisson algebra, then $E_{\mathcal P}^A(a)$ and $E_{\mathcal P}^L(a)$ are subalgebras of ${\mathcal P}$ which we will call Engel subalgebras of ${\mathcal P}$.
\end{lemma}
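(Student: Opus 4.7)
The plan is to verify closure of each set under addition, the associative product $\cdot$, and the Lie bracket. Closure under sums is routine: if $P_a^m(x)=P_a^n(y)=0$, then $P_a^{\max(m,n)}(x+y)=0$ by linearity, and the same argument works for $Q_a$.

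For $E_{\mathcal P}^L(a)$, my key observation is that $Q_a$ is a derivation of \emph{both} multiplications. The Jacobi identity yields $Q_a([y,z])=[Q_a(y),z]+[y,Q_a(z)]$, while the Leibniz rule (combined with antisymmetry of the bracket) yields $Q_a(y\cdot z)=Q_a(y)\cdot z+y\cdot Q_a(z)$. Hence the standard Leibniz-type binomial formula
\[
Q_a^n(x\cdot y)=\sum_{k=0}^{n}\binom{n}{k}\,Q_a^k(x)\cdot Q_a^{n-k}(y),
\]
together with its bracket analogue, holds; choosing $n$ larger than the sum of the nilpotency indices of $Q_a$ on $x$ and $y$ annihilates every term on the right.

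For $E_{\mathcal P}^A(a)$, associativity and commutativity of $\cdot$ give $P_a(x\cdot y)=(a\cdot x)\cdot y=P_a(x)\cdot y$, so $P_a^n(x\cdot y)=P_a^n(x)\cdot y$ and closure under $\cdot$ is immediate (in fact $E_{\mathcal P}^A(a)$ is a left $\cdot$-ideal of ${\mathcal P}$). The bracket step is the genuine obstacle, because $P_a$ is not a derivation of $[\,\cdot\,,\,\cdot\,]$. Starting from Leibniz in the form $[a\cdot y,z]=[a,z]\cdot y+a\cdot[y,z]$, I would extract
\[
P_a([y,z])=[P_a(y),z]-y\cdot[a,z],
\]
and then, using $P_a(y\cdot[a,z])=P_a(y)\cdot[a,z]$ at each step, prove by a short induction on $n$ the compact recursion
\[
P_a^n([y,z])=[P_a^n(y),z]-n\,P_a^{n-1}(y)\cdot[a,z].
\]
Once this is in hand, $P_a^M(y)=0$ forces $P_a^{M+1}([y,z])=0$ for \emph{every} $z\in{\mathcal P}$, so $E_{\mathcal P}^A(a)$ is closed under the bracket (indeed, it is a Lie ideal of ${\mathcal P}$). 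The only non-routine point is guessing and verifying this recursion; everything else is formal.
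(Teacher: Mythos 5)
Your proposal is correct and follows essentially the same route as the paper: the binomial/derivation formula for $Q_a$ handles $E_{\mathcal P}^L(a)$, and the recursion $P_a^n([y,z])=[P_a^n(y),z]-nP_a^{n-1}(y)\cdot[a,z]$ (which is exactly the paper's key identity, and with the correct sign on the second term) handles the bracket closure of $E_{\mathcal P}^A(a)$. Your additional observations that $E_{\mathcal P}^A(a)$ is in fact a Lie ideal and a $\cdot$-ideal of ${\mathcal P}$ are valid but not needed for the statement.
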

\begin{proof} Let $x,y\in E_{\mathcal P}^A(a)$, so $ P_a^n(x)=P_a^n(y)=0$ for some $n$. Clearly, $P_a^n(x\cdot y)=x\cdot P_a^n(y)=0$. Now, a straightforward induction proof yields that
\[ P_a^n([x,y])=[P_a^n(x),y]-nP_a^{n-1}(x)\cdot [y,a],
\] so $P_a^{n+1}([x,y])=0$. Hence  $E_{\mathcal P}^A(a)$ is a subalgebra of ${\mathcal P}$.
\par

Let $x,y\in E_{\mathcal P}^L(a)$, so $Q_a^n(x)=Q_a^n(y)=0$ for some $n$. Clearly, $Q_a^r([x,y])=0$ for some $r$, using the Jacobi identity. Now, the Leibniz condition implies that $Q_a$ acts as a derivation on the associative structure of ${\mathcal P}$, so
\[ Q_a^r(x\cdot y)=\sum_{i=0}^r \left( \begin{array}{c} r\\ i \end{array}\right) Q_a^i(x)\cdot Q_a^{r-i}(y).
\] Choosing $r\geq 2n$ gives $Q_a^r(x\cdot y)=0$. It follows that $E_{\mathcal P}^L(a)$ is a subalgebra of ${\mathcal P}$.
\end{proof}

\begin{defn} If $U$ is a subalgebra of a Poisson algebra ${\mathcal P}$, the idealiser (respectively, Lie idealiser, associative idealiser) of $U$ in ${\mathcal P}$ is $I_{\mathcal P}(U)=\{x\in {\mathcal P} \mid [x,u]+x\cdot u\in U \hbox{ for all } u\in U\}$ (respectively $I_{\mathcal P}^L(U)=\{x\in {\mathcal P} \mid [x,u]\in U \hbox{ for all } u\in U\}$, $I_{\mathcal P}^A(U)=\{x\in {\mathcal P} \mid x\cdot u\in U \hbox{ for all } u\in U\}$).
\end{defn}

We will need the following result.

\begin{lemma}\label{norm} If $U$ is a Lie subalgebra of a Poisson algebra ${\mathcal P}$ containing  $E_{\mathcal P}^L(a)$, then  $I_{\mathcal P}^L(U)=U$.
\end{lemma}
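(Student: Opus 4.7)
The plan is to apply the Fitting decomposition of $\mathcal{P}$ with respect to the endomorphism $Q_a$. First observe that $a\in E_{\mathcal{P}}^L(a)\subseteq U$ since $Q_a(a)=0$, so $U$ is stable under $Q_a$ because $U$ is a Lie subalgebra. Writing $\mathcal{P}=E_{\mathcal{P}}^L(a)\dot{+}\mathcal{P}_1$ with $\mathcal{P}_1=\bigcap_{n\geq 1}Q_a^n(\mathcal{P})$, both summands are $Q_a$-stable and $Q_a$ acts invertibly on $\mathcal{P}_1$. Since $E_{\mathcal{P}}^L(a)\subseteq U$ and $U$ is $Q_a$-stable, the modular law yields the refined splitting $U=E_{\mathcal{P}}^L(a)\dot{+}(U\cap\mathcal{P}_1)$, and $U\cap\mathcal{P}_1$ is itself $Q_a$-stable. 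As $\mathcal{P}$ is finite-dimensional and $Q_a|_{\mathcal{P}_1}$ is injective, $Q_a$ restricts to a bijection of $U\cap\mathcal{P}_1$ onto itself.

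Now take $x\in I_{\mathcal{P}}^L(U)$ and decompose $x=x_0+x_1$ with $x_0\in E_{\mathcal{P}}^L(a)\subseteq U$ and $x_1\in\mathcal{P}_1$; it suffices to prove $x_1\in U$. Since $a\in U$, $[x,a]\in U$, and $[x,a]=-Q_a(x_0)-Q_a(x_1)$ with the two terms lying in $E_{\mathcal{P}}^L(a)$ and $\mathcal{P}_1$ respectively. By uniqueness of the Fitting decomposition applied inside $U$, the $\mathcal{P}_1$-component $-Q_a(x_1)$ lies in $U\cap\mathcal{P}_1$. Bijectivity of $Q_a$ on $U\cap\mathcal{P}_1$ then produces $y\in U\cap\mathcal{P}_1$ with $Q_a(y)=Q_a(x_1)$, so $x_1-y\in \ker Q_a\cap\mathcal{P}_1=0$, yielding $x_1=y\in U$. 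Hence $I_{\mathcal{P}}^L(U)\subseteq U$; the reverse containment is trivial.

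The only point requiring care is confirming that this classical Lie-theoretic Fitting decomposition argument transfers to the Poisson setting without interference from the associative multiplication. This turns out to be automatic, since both $I_{\mathcal{P}}^L$ and $E_{\mathcal{P}}^L(a)$ are defined purely in terms of the bracket, so the entire argument takes place inside the Lie algebra $\mathcal{P}_L$.
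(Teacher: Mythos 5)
Your argument is correct. Note, though, that the paper does not prove this lemma at all: it observes that the statement is purely Lie-theoretic (both $E_{\mathcal P}^L(a)$ and $I_{\mathcal P}^L$ involve only the bracket, so everything happens in $\mathcal{P}_L$) and cites Winter's \emph{Abstract Lie algebras} for the classical self-idealizing property of subalgebras containing a Fitting null component. What you have written out is precisely that classical Fitting-decomposition argument, correctly transcribed into the notation $Q_a$: the modular-law splitting $U=E_{\mathcal P}^L(a)\dot{+}(U\cap\mathcal{P}_1)$, the bijectivity of $Q_a$ on $U\cap\mathcal{P}_1$, and the cancellation $x_1-y\in\ker Q_a\cap\mathcal{P}_1=0$ are all sound, and your closing remark that the associative product plays no role is exactly the point the paper makes in one line. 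A small observation: your proof only ever uses the single condition $[x,a]\in U$, so it actually establishes the slightly stronger statement that $\{x\in\mathcal{P}\mid [x,a]\in U\}=U$; this is harmless and is a standard feature of this proof. The only thing you give up relative to the paper is brevity; what you gain is a self-contained proof rather than an external citation.
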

\begin{proof} This is a purely Lie algebraic result which is proved as \cite[Theorem 4.4.4.4]{winter}.
\end{proof}

\begin{defn} Following Barnes and Newell in \cite{bn} we put, for $a\in \mathcal{P}$, 
\begin{align*}
&S_{\mathcal{P}}(a,,\mathbb{F}) \\
&= \left\{x\in \mathcal{P}\mid \prod_{i=1}^n(Q_a-\lambda_i1)(x)=0 \hbox{ for some } \lambda_1, \ldots, \lambda_n\in \mathbb{F} \hbox{ and some } n\right\}.
\end{align*} If $\lambda_1, \ldots, \lambda_n$ are the eigenvalues of $Q_a$ in $\mathbb{F}$ and $r_i$ is the multiplicity of $\lambda_i$, then, putting $f(t)=\displaystyle\prod_{i=1}^n(t-\lambda_i)^{r_i}$, $S_{\mathcal{P}}(a,\mathbb{F})=\{x\in \mathcal{P}\mid f(Q_a)(x)=0\}$ and $K_{\mathcal{P}}(a,\mathbb{F})=f(Q_a)(\mathcal{P})$ is a complementary $Q_a$-stable subspace of $\mathcal{P}$.
\end{defn}

\begin{lemma} If $\mathcal{P}$ is a Poisson algebra and $a\in \mathcal{P}$, then $S_{\mathcal{P}}(a,\mathbb{F})$ is a subalgebra of $\mathcal{P}$.
\end{lemma}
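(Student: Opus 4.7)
My plan is to exploit the fact that $Q_a$ is a derivation with respect to \emph{both} the Lie bracket and the associative product. For the Lie bracket this is the Jacobi identity, and for the associative product it follows from the Leibniz compatibility axiom: using antisymmetry,
\[
Q_a(x\cdot y)=[a,x\cdot y]=-[x\cdot y,a]=-([x,a]\cdot y+x\cdot[y,a])=Q_a(x)\cdot y+x\cdot Q_a(y).
\]
Once this derivation property is in hand for both multiplications, the result reduces to a standard generalised-eigenspace argument.

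I would begin by decomposing $S_{\mathcal{P}}(a,\mathbb{F})=\bigoplus_{\lambda}S_\lambda$ into generalised eigenspaces $S_\lambda=\{x\mid (Q_a-\lambda)^nx=0\text{ for some }n\}$, where $\lambda$ ranges over the eigenvalues of $Q_a$ in $\mathbb{F}$. Any $x,y\in S_{\mathcal{P}}(a,\mathbb{F})$ then split as $x=\sum x_\lambda$ and $y=\sum y_\mu$ with $x_\lambda\in S_\lambda$ and $y_\mu\in S_\mu$, so both $x\cdot y$ and $[x,y]$ are finite sums of terms of the form $x_\lambda\cdot y_\mu$ or $[x_\lambda,y_\mu]$. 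It therefore suffices to show that $S_\lambda\cdot S_\mu\subseteq S_{\lambda+\mu}$ and $[S_\lambda,S_\mu]\subseteq S_{\lambda+\mu}$.

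The core calculation is the following: if $D$ is any derivation of a bilinear operation $\star$, a straightforward induction on $k$ gives
\[
(D-(\lambda+\mu))^{k}(x\star y)=\sum_{j=0}^{k}\binom{k}{j}\bigl((D-\lambda)^{j}x\bigr)\star\bigl((D-\mu)^{k-j}y\bigr).
\]
Hence if $(D-\lambda)^{m}x=0$ and $(D-\mu)^{n}y=0$, taking $k=m+n-1$ forces each summand to vanish (either $j\geq m$ or $k-j\geq n$), so $x\star y$ lies in the generalised $(\lambda+\mu)$-eigenspace of $D$. Applying this with $D=Q_a$ to $\star=\cdot$ and to $\star=[\cdot,\cdot]$ gives the two desired inclusions.

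The only real subtlety is ensuring that the derivation property of $Q_a$ on the associative product is available, which is exactly the Leibniz compatibility identity shown above; the rest is bookkeeping with binomial coefficients. No obstacle is expected beyond recording the two parallel calculations.
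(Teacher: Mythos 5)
Your proof is correct and follows essentially the same route as the paper: both rest on the observation that $Q_a$ is a derivation of each multiplication (Jacobi identity for the bracket, the Leibniz compatibility for the associative product), so that generalised eigenspaces multiply according to $S_\lambda\star S_\mu\subseteq S_{\lambda+\mu}$. The only difference is that you write out the binomial-expansion argument in full, whereas the paper delegates it to paragraph 2.3 of Barnes and Newell and merely notes that the associative case is analogous.
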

\begin{proof} It is a Lie subalgebra of $\mathcal{P}$, by \cite[paragraph 2.3]{bn}. Moreover, if $\displaystyle\prod_i(Q_a-\lambda_i1)(x)=0$ and $\displaystyle\prod_j(Q_a-\mu_j(y)=0$, then $\displaystyle\prod_{i,j}(Q_a-(\lambda_i+\mu_j)1)(x\cdot y)=0$, as in that same paragraph, using the fact that $Q_a$ acts as a derivation on the associative structure of $\mathcal{P}$.
\end{proof}

\section{The Frattini subalgebra of a dialgebra}
\begin{defn} The Frattini subalgebra, $F(\mathcal{A})$, of a dialgebra $\mathcal{A}$ is the intersection of the maximal subalgebras of $\mathcal{A}$; The Frattini ideal, $\phi(\mathcal{A})$, is the largest ideal of $\mathcal{A}$ contained in $F(\mathcal{A})$. As in the other structures in which it has been defined, $F(\mathcal{A})$ is the set of nongenerators of $\mathcal{A}$. If ${\mathcal P}$ is a Poisson algebra, we denote by $F_A({\mathcal P})$ (respectively, $F_L({\mathcal P})$) the intersection of the maximal associative (respectively, Lie) subalgebras of ${\mathcal P}$. Correspondingly, $\phi_A({\mathcal P})$ (respectively, $\phi_L({\mathcal P})$) is the largest associative (respectively, Lie) ideal contained in $F_A({\mathcal P})$ (respectively $F_L({\mathcal P})$).
\end{defn}

The following results are proved in much the same way as the corresponding results in \cite{frat}. Some proofs are given for the convenience of the reader.

\begin{lemma}\label{sub} If $C$ is a subalgebra of the dialgebra $\mathcal{A}$, and $B$ is an ideal of $\mathcal{A}$ contained in $F(C)$, then $B$ is contained in $F(A)$.
\end{lemma}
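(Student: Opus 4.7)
The plan is to show that $B$ is contained in every maximal subalgebra $M$ of $\mathcal{A}$; since $F(\mathcal{A})$ is by definition the intersection of these, this gives the result.

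First I would argue by contradiction: suppose some maximal subalgebra $M$ of $\mathcal{A}$ satisfies $B \not\subseteq M$. Since $B$ is an ideal of $\mathcal{A}$, the subspace $M+B$ is closed under both multiplications (each of the four products $M\cdot M,\ M\cdot B,\ B\cdot M,\ B\cdot B$ and the analogous brackets lies in $M+B$), so $M+B$ is a subalgebra of $\mathcal{A}$ strictly containing $M$. By maximality, $M+B=\mathcal{A}$.

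Next I would intersect with $C$. Because $B\subseteq F(C)\subseteq C$, the modular law yields
\[
C \;=\; C\cap\mathcal{A} \;=\; C\cap(M+B) \;=\; (C\cap M)+B.
\]
The crux is to show that $C\cap M=C$. If not, then $C\cap M$ is a proper subalgebra of $C$ and, by finite-dimensionality, is contained in some maximal subalgebra $M^{*}$ of $C$. But $B\subseteq F(C)\subseteq M^{*}$ by definition of $F(C)$, so both summands in $C=(C\cap M)+B$ lie in $M^{*}$, giving $C\subseteq M^{*}$, a contradiction. Hence $C\subseteq M$, and therefore $B\subseteq C\subseteq M$, contradicting the assumption $B\not\subseteq M$.

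I do not expect any substantive obstacle: the argument is the standard Frattini-theoretic transfer used for groups, associative algebras, and Lie algebras, and it goes through verbatim in the dialgebra setting. The only points to verify carefully are closure of $M+B$ under both $\cdot$ and $[\,,\,]$ (immediate from $B\trianglelefteq\mathcal{A}$) and the modular identity $C\cap(M+B)=(C\cap M)+B$ (valid because $B\subseteq C$).
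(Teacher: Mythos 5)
Your proof is correct and follows essentially the same route as the paper: assume $B\not\subseteq M$ for some maximal subalgebra $M$, conclude $\mathcal{A}=B+M$, apply the modular law to get $C=(C\cap M)+B$, and use the fact that $F(C)$ lies in every maximal subalgebra of $C$ to force $C=C\cap M\subseteq M$, contradicting $B\subseteq C$. You merely spell out the nongenerator step ($C=F(C)+(M\cap C)$ implies $C=M\cap C$) in more detail than the paper does.
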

\begin{proof} Suppose that $B\not\subseteq F(A)$. Then there is a maximal subalgebra $M$ of $\mathcal{A}$ such that $\mathcal{A}=B+M$ and
\[ C=B+M\cap C=F(C)+M\cap C=M\cap C,
\] so $B\subseteq C\subseteq M$, a contradiction. the result follows.
\end{proof}

\begin{lemma}\label{fac} Let $B$ be an ideal of the dialgebra $\mathcal{A}$. Then
\begin{itemize}
\item[(i)] $(F(\mathcal{A})+B)/B\subseteq F(\mathcal{A}/B)$ and $(\phi(\mathcal{A})+B)/B\subseteq \phi(\mathcal{A})/B$;
\item[(ii)] if $B\subseteq F(\mathcal{A})$, $F(\mathcal{A})/B=F(\mathcal{A}/B)$ and $\phi(\mathcal{A})/B=\phi(\mathcal{A}/B)$.
\end{itemize}
\end{lemma}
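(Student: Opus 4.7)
The plan is to exploit the correspondence between subalgebras of $\mathcal{A}/B$ and subalgebras of $\mathcal{A}$ containing $B$ given by the quotient homomorphism $\pi:\mathcal{A}\to \mathcal{A}/B$. Under this correspondence, maximal subalgebras of $\mathcal{A}/B$ pull back bijectively to maximal subalgebras of $\mathcal{A}$ that contain $B$, and ideals of $\mathcal{A}/B$ pull back bijectively to ideals of $\mathcal{A}$ containing $B$. This is essentially formal from the definition of the quotient dialgebra given earlier, and the same machinery drives the classical Frattini results that this lemma mirrors.

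For part (i), I would take an arbitrary maximal subalgebra of $\mathcal{A}/B$, write it as $M/B$ where $M\supseteq B$ is a maximal subalgebra of $\mathcal{A}$, and observe that $F(\mathcal{A})\subseteq M$, so $(F(\mathcal{A})+B)/B\subseteq M/B$. Intersecting over all such $M/B$ yields $(F(\mathcal{A})+B)/B\subseteq F(\mathcal{A}/B)$. For the Frattini ideal inclusion (where I read the right-hand side as $\phi(\mathcal{A}/B)$, correcting what appears to be a typo), note that $\phi(\mathcal{A})+B$ is an ideal of $\mathcal{A}$ containing $B$, so $(\phi(\mathcal{A})+B)/B$ is an ideal of $\mathcal{A}/B$; since it sits inside $(F(\mathcal{A})+B)/B\subseteq F(\mathcal{A}/B)$, the maximality built into the definition of $\phi$ gives $(\phi(\mathcal{A})+B)/B\subseteq \phi(\mathcal{A}/B)$.

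For part (ii), the added hypothesis $B\subseteq F(\mathcal{A})$ forces every maximal subalgebra $M$ of $\mathcal{A}$ to contain $B$, so the correspondence becomes a bijection between the full set of maximal subalgebras of $\mathcal{A}$ and those of $\mathcal{A}/B$. Intersecting and passing to the quotient then gives $F(\mathcal{A})/B=F(\mathcal{A}/B)$. For the Frattini ideals, one inclusion is immediate from part (i): $\phi(\mathcal{A})/B$ is an ideal of $\mathcal{A}/B$ contained in $F(\mathcal{A})/B=F(\mathcal{A}/B)$, hence in $\phi(\mathcal{A}/B)$. Conversely, setting $J=\pi^{-1}(\phi(\mathcal{A}/B))$, $J$ is an ideal of $\mathcal{A}$ with $J/B\subseteq F(\mathcal{A}/B)=F(\mathcal{A})/B$, so $J\subseteq F(\mathcal{A})$ and therefore $J\subseteq \phi(\mathcal{A})$, yielding $\phi(\mathcal{A}/B)=J/B\subseteq \phi(\mathcal{A})/B$.

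There is no real obstacle here; the only point that requires attention is the failure, in part (i), of an arbitrary maximal subalgebra of $\mathcal{A}$ to contain $B$, which is exactly what prevents the inclusion from being an equality in general and what the extra hypothesis in (ii) remedies.
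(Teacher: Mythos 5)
Your proof is correct and is precisely the standard correspondence argument: the paper gives no proof of this lemma, saying only that it is proved in much the same way as the corresponding results in \cite{frat}, and your argument is the intended one. You are also right that ``$\phi(\mathcal{A})/B$'' on the right-hand side in (i) is a typo for $\phi(\mathcal{A}/B)$.
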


\begin{lemma}\label{cor} If $R$ is an ideal of the dialgebra  $\mathcal{A}$ and $F(\mathcal{A}/R)=0$ (respectively, $\phi(\mathcal{A}/R)=0$), then $F(\mathcal{A})\subseteq R$ (respectively, $\phi(\mathcal{A})\subseteq R$).
\end{lemma}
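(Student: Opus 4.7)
The plan is to derive this lemma as an immediate consequence of Lemma \ref{fac}(i). That result says that $(F(\mathcal{A})+R)/R \subseteq F(\mathcal{A}/R)$ and $(\phi(\mathcal{A})+R)/R \subseteq \phi(\mathcal{A}/R)$ for any ideal $R$ of $\mathcal{A}$, so once we have $F(\mathcal{A}/R)=0$ (resp.\ $\phi(\mathcal{A}/R)=0$), the left-hand sides must be zero too.

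Concretely, I would argue as follows. Suppose first $F(\mathcal{A}/R)=0$. By Lemma \ref{fac}(i), $(F(\mathcal{A})+R)/R\subseteq F(\mathcal{A}/R)=0$, so $F(\mathcal{A})+R\subseteq R$, giving $F(\mathcal{A})\subseteq R$. The $\phi$ statement is identical with $F$ replaced by $\phi$ throughout, again invoking the second inclusion of Lemma \ref{fac}(i).

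Since the argument reduces entirely to quoting Lemma \ref{fac}(i) and observing that a subspace of the zero quotient is zero, there is no real obstacle to overcome; the only subtlety worth flagging is making sure the $\phi$-version of Lemma \ref{fac}(i) is legitimately applicable in the dialgebra setting (which it is, as $\phi$ is defined uniformly as the largest ideal inside $F$). I would therefore present the two cases in parallel in a couple of lines, making the logical dependence on Lemma \ref{fac} explicit.
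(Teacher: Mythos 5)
Your proof is correct and is exactly the intended argument: the paper states Lemma \ref{cor} without proof as an immediate consequence of Lemma \ref{fac}(i), which is precisely how you derive it. Nothing further is needed.
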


\begin{theor}\label{dsum} If $\mathcal{A}$ is a dialgebra and $\mathcal{A}=A_1\oplus \ldots \oplus A_n$, then $\phi(\mathcal{A})=\phi(A_1)\oplus \ldots \oplus \phi(A_n)$.
\end{theor}
\begin{proof} It is easy to see that $F(\mathcal{A})\subseteq F(A_1)+ \ldots +F(A_n)$, so 
\[  \phi(\mathcal{A})\cap A_i\subseteq F(\mathcal{A})\cap A_i\subseteq F(A_i).
\] Moreover, since $\phi(\mathcal{A})\cap A_i$ is an ideal of $A_i$, $\phi(\mathcal{A})\cap A_i\subseteq \phi(A_i)$. Also, $\phi(A_i)$ is an ideal of $\mathcal{A}$ and is contained in $F(A_i)$, so $\phi(A_i)\subseteq F(\mathcal{A})$, by Lemma \ref{sub}. Consequently, $\phi(A_i)\subseteq \phi(\mathcal{A})\cap A_i$, whence $\phi(A_i)=\phi(\mathcal{A})\cap A_i$. Thus $\phi(A_1)\oplus \ldots \oplus \phi(A_n)\subseteq \phi(\mathcal{A})$.
\par

Now suppose that $x\in \phi(\mathcal{A})$, so $x=\sum_{i=1}^nf_i$, where $f_i\in F(A_i)$. Then
\begin{align*}
 x\cdot A_i+A_i\cdot x+[x,A_i]+[A_i,x]&= f_i\cdot A_i+A_i\cdot f_i+[f_i,A_i]+[A_i,f_i] \\
&\subseteq \phi(\mathcal{A})\cap A_i=\phi(A_i).
\end{align*}  It follows that $f_i\in \phi(A_i)$, whence $x\in \phi(A_1)\oplus \ldots \oplus \phi(A_n)$.
\end{proof}

\begin{lemma}\label{min} Let $B$ be an ideal of a dialgebra ${\mathcal A}$, and let $U$ be a subalgebra of ${\mathcal A}$ which is minimal with respect to the property that ${\mathcal A}=B+U$.Then $B\cap U\subseteq \phi(U)$.
\end{lemma}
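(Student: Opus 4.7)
The plan is to reduce the statement to the inclusion $B\cap U\subseteq F(U)$. Note first that $B\cap U$ is an ideal of $U$: it is a subspace, it lies in $U$ by definition, and for any $u\in U$ and $b\in B\cap U$ the four products $b\cdot u,\ u\cdot b,\ [b,u],\ [u,b]$ lie in $B$ because $B$ is an ideal of $\mathcal{A}$, and lie in $U$ because $U$ is a subalgebra. Once I know $B\cap U\subseteq F(U)$, the definition of $\phi(U)$ as the largest ideal of $U$ that is contained in $F(U)$ immediately gives $B\cap U\subseteq \phi(U)$, which is the desired conclusion.

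To prove $B\cap U\subseteq F(U)$, I argue by contradiction. Suppose there is a maximal subalgebra $M$ of $U$ with $B\cap U\not\subseteq M$. Then $M+(B\cap U)$ is a subalgebra of $U$ strictly containing $M$ (one checks it is closed under both multiplications using that $B\cap U$ is an ideal of $U$), so maximality of $M$ forces
\[
U = M + (B\cap U).
\]
Substituting into $\mathcal{A}=B+U$ and using $B\cap U\subseteq B$ gives
\[
\mathcal{A} = B + M + (B\cap U) = B + M.
\]
But $M$ is a proper subalgebra of $U$ and $M$ is a subalgebra of $\mathcal{A}$, so this contradicts the minimality of $U$ among subalgebras $V$ of $\mathcal{A}$ satisfying $\mathcal{A}=B+V$.

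I do not expect a genuine obstacle: the only nontrivial verifications are that $B\cap U$ is an ideal of $U$ and that $M+(B\cap U)$ is a subalgebra, both of which are immediate from the fact that each of $B$ and $B\cap U$ absorbs products with elements of $\mathcal{A}$ and of $U$ respectively. The argument is the standard Frattini-style minimal-complement argument and goes through verbatim for dialgebras since it only uses the subspace-plus-subalgebra structure and not any identity satisfied by the operations.
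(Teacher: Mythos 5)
Your proof is correct and follows essentially the same route as the paper's: show $B\cap U$ is an ideal of $U$, pick a maximal subalgebra $M$ of $U$ not containing it, deduce $U=M+(B\cap U)$ and hence $\mathcal{A}=B+M$, contradicting the minimality of $U$. The only cosmetic difference is that you make explicit the reduction from $\phi(U)$ to $F(U)$ via the "largest ideal contained in $F(U)$" definition, which the paper leaves implicit.
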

\begin{proof} Suppose that $B\cap U\not\subseteq \phi(U)$. Then 
\[
U\cdot (B\cap U)+(B\cap U)\cdot U+[U,B\cap U]+[B\cap U,U]\subseteq B\cap U,
\] so $B\cap U$ is an ideal of $U$ and there is a maximal subalgebra $M$ of $U$ such that $B\cap U\not\subseteq M$. It is easy to see that $U=B\cap U+M$, and so
\[ {\mathcal P}=B+(B\cap U+M)=B+M,
\] contradicting the minimality of $U$. The result follows.
\end{proof}

\begin{lemma}\label{splits} Let $B$ be a zero ideal of a dialgebra ${\mathcal A}$ such that $B\cap \phi({\mathcal A})=0$. Then there is a subalgebra $C$ of ${\mathcal A}$ such that ${\mathcal A}=B\dot{+}C$. (We say that ${\mathcal A}$ {\bf splits} over $B$.)
\end{lemma}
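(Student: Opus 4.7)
The plan is to choose $C$ to be a subalgebra of $\mathcal{A}$ which is minimal with respect to the property $\mathcal{A}=B+C$; such a $C$ exists since $\mathcal{A}$ is finite-dimensional. By Lemma \ref{min}, $B\cap C\subseteq \phi(C)$. The task then reduces to showing $B\cap C=0$, so that the sum is direct.

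The crux is to upgrade $B\cap C$ from being merely a subspace of $\phi(C)$ to being an ideal of the whole algebra $\mathcal{A}$. First, $B\cap C$ is automatically an ideal of $C$: for $u\in C$, the product $u\cdot(B\cap C)$ lies in $C$ (since $C$ is a subalgebra and $B\cap C\subseteq C$) and in $B$ (since $B$ is an ideal of $\mathcal{A}$), and likewise for right multiplication and the two bracket operations. To promote this to an ideal of $\mathcal{A}$, I would use the decomposition $\mathcal{A}=B+C$ and compute
\[
\mathcal{A}\cdot (B\cap C)=B\cdot(B\cap C)+C\cdot(B\cap C).
\]
The second summand lies in $B\cap C$ by the previous observation, and the first vanishes because $B$ is a zero ideal, so $B\cdot B=0$. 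The same argument handles right multiplication and the two bracket products, using $[B,B]=0$.

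With $B\cap C$ an ideal of $\mathcal{A}$ contained in $\phi(C)\subseteq F(C)$, Lemma \ref{sub} (with $C$ as the subalgebra and $B\cap C$ as the ideal) yields $B\cap C\subseteq F(\mathcal{A})$. Because $B\cap C$ is itself an ideal of $\mathcal{A}$, it lies inside the largest ideal of $\mathcal{A}$ contained in $F(\mathcal{A})$, namely $\phi(\mathcal{A})$. Combining this with the hypothesis $B\cap \phi(\mathcal{A})=0$ gives $B\cap C\subseteq B\cap \phi(\mathcal{A})=0$, so $\mathcal{A}=B\dot{+}C$ as required.

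The main obstacle is precisely the step that converts $B\cap C$ into an ideal of $\mathcal{A}$; this is where the zero-ideal hypothesis on $B$ is essential, since it is exactly what makes the troublesome terms $B\cdot(B\cap C)$ and $[B,B\cap C]$ vanish. Everything else is a bookkeeping assembly of Lemma \ref{min}, Lemma \ref{sub}, and the definition of $\phi(\mathcal{A})$.
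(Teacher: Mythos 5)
Your proposal is correct and follows essentially the same route as the paper: choose $C$ minimal with $\mathcal{A}=B+C$, apply Lemma \ref{min}, use the zero-ideal hypothesis to promote $B\cap C$ to an ideal of $\mathcal{A}$, and then conclude via Lemma \ref{sub} and $B\cap\phi(\mathcal{A})=0$. You merely spell out some steps (why $B\cap C$ is an ideal of $C$, and why an ideal of $\mathcal{A}$ inside $F(\mathcal{A})$ lies in $\phi(\mathcal{A})$) that the paper leaves implicit.
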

\begin{proof} Choose $C$ to be a subalgebra of ${\mathcal A}$ which is minimal with respect to ${\mathcal A}=B+C$. Then, by Lemma \ref{min}, $B\cap C\subseteq \phi(C)$. Now $B\cap C$ is an ideal of $C$ (as in Lemma \ref{min}) and 
\[
B\cdot (B\cap C)+(B\cap C)\cdot B+[B,B\cap C]+[B\cap C,B]\subseteq B^{(1)}=0,
\] so $B\cap C$ is an ideal of ${\mathcal A}$. It folows from Lemma \ref{sub} that $B\cap C\subseteq \phi(\mathcal{A})\cap B=0$ and $\mathcal{A}=B\dot{+}C$.
\end{proof}

\section{Main results}
\begin{defn}  The factor algebra $A/B$ is called a chief factor of $\mathcal{P}$ if $B$ is an ideal of $\mathcal{P}$ and $A/B$ is a minimal ideal of $\mathcal{P}/B$. 
\end{defn}

\begin{theor}\label{factor} Let $B$ be a subideal of a Poisson algebra ${\mathcal P}$, and let $C$ be an ideal of $B$ with $C\subseteq \phi({\mathcal P})$. If $B/C$ is nilpotent or supersolvable, then so is $B$.
\end{theor}
\begin{proof} Let $b\in B$ and $B=B_0\subseteq B_1\subseteq \ldots\subseteq B_r={\mathcal P}$ be a chain of subalgebras of ${\mathcal P}$ with $B_i$ an ideal of $B_{i+1}$ for $0\leq i\leq r-1$. Then $P_b^r({\mathcal P})+Q_b^r({\mathcal P}) \subseteq B$. 
\par

Suppose first that $B/C$ is nilpotent. Since $B/C$ is associative and Lie nilpotent, there exists $s$ such that $P_b^s(B)+Q_b(B)\subseteq C$. Hence $P_b^{r+s}({\mathcal P})+Q_b^{r+s}\subseteq \phi({\mathcal P})$. But $P_b^{r+s}({\mathcal P})+E_{\mathcal P}^A(b)={\mathcal P}$, and $Q_b^{r+s}({\mathcal P})+E_{\mathcal P}^L(b)={\mathcal P}$, by Fitting's Lemma, so $E_{\mathcal P}^A(b)=E_{\mathcal P}^L={\mathcal P}$. It follows that $B$ is both associative and Lie nilpotent, by Engel's theorem and the fact that commutative associative nilalgebras are nilpotent. Hence $B$ is nilpotent, by Proposition \ref{nil}. 
\par

Now assume that $B/C$ is supersolvable. Then $K_{\mathcal{P}}(b,\mathbb{F})\subseteq C$ for all $b\in B$, whence $\mathcal{P}=C+S_{\mathcal{P}}(b,\mathbb{F})$. But $C\subseteq \phi(\mathcal{P})$, so $\mathcal{P}=S_{\mathcal{P}}(b,\mathbb{F})$ for all $b\in B$. Moreover, $(B/C)^2$ is nilpotent, by Proposiition \ref{sup}, so $B^2/B^2\cap                                   C$ is nilpotent, which yields that $B^2$ is nilpotent, by the paragraph above. It follows from \cite[Lemma 2.4]{bn} that $B_L$ is supersolvable. Let $B_i/B_{i+1}$ be a  chief factor for $B$. Then $B$ is associative nilpotent, since $B_A^2\subseteq B^2$ is nilpotent. It follows that $B.B_i\subseteq B_{i+1}$ and hence that $B_i/B_{i+1}$ is a minimal Lie ideal of $B/B_{i+1}$. Thus, $\dim (B_i/B_{i+1}=1$ and $B$ is supersolvable.
\end{proof}

\begin{coro}\label{phi}  If ${\mathcal P}$ is a Poisson algebra, then $\phi({\mathcal P})$ is nilpotent.
\end{coro}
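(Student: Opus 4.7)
The plan is to apply Theorem \ref{factor} in the simplest possible way: take $B = C = \phi(\mathcal{P})$. First I verify the hypotheses. By definition, $\phi(\mathcal{P})$ is the largest ideal of $\mathcal{P}$ contained in $F(\mathcal{P})$, so it is in particular an ideal of $\mathcal{P}$, hence a subideal. Choosing $C = \phi(\mathcal{P})$, we have $C$ is trivially an ideal of $B$ (it equals $B$) and obviously $C \subseteq \phi(\mathcal{P})$. The quotient $B/C$ is the zero algebra, which is nilpotent. Theorem \ref{factor} then yields at once that $B = \phi(\mathcal{P})$ is nilpotent.

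This is the canonical application of the Frattini-factor theorem, paralleling exactly how the corresponding corollaries are obtained for Lie algebras and for associative algebras in the literature. In effect, the entire content of the corollary is already packaged inside Theorem \ref{factor}, since the theorem was formulated so that ``nilpotent modulo a subalgebra of $\phi(\mathcal{P})$'' implies ``nilpotent.'' The more general subideal hypothesis in the theorem is not needed here; we only use the case where $B$ is itself an ideal of $\mathcal{P}$, and the fact that $\phi(\mathcal{P})$ certainly lies in $\phi(\mathcal{P})$. There is no real obstacle: the work has already been done in establishing that Engel subalgebras of a Poisson algebra behave well under the Fitting decomposition and that nilpotency of $B/C$ lifts through $\phi$.
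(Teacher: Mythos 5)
Your proof is correct and is exactly the argument the paper intends: the corollary is stated without proof as an immediate consequence of Theorem \ref{factor}, applied with $B=C=\phi({\mathcal P})$ (an ideal, hence a subideal, with $B/C=0$ nilpotent). Nothing further is needed.
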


\begin{defn} If $\mathcal{P}$ is a Poisson algebra, the {\bf socle}, denoted $Soc(\mathcal{P})$ (respectively, {\bf zero socle}, denoted $Zsoc(\mathcal{P})$) of ${\mathcal P}$ is the sum of the minimal ideals (respectively, minimal zero ideals) of ${\mathcal P}$. We will say that ${\mathcal P}$ is {\bf $\phi$-free} (respectively $\phi_A$-free, $\phi_L$-free) if $\phi({\mathcal P})=0$ (respectively, $\phi_A({\mathcal P})=0$, $\phi_L({\mathcal P})=0$).
\end{defn}

\begin{theor}\label{split}  Let  ${\mathcal P}$  be a Poisson algebra. Then  ${\mathcal P}$ is $\phi$-free if and only if it splits over its zero socle.
\end{theor}
\begin{proof} If  ${\mathcal P}$ is $\phi$-free, it splits over its zero socle, by Lemma \ref{splits}. So suppose that ${\mathcal P}=Zsoc(A)\dot{+} C$, where $C$ is a subalgebra of ${\mathcal P}$ and $Zsoc(A)=Z_1\oplus\ldots\oplus Z_n$, where $Z_i$ is a minimal zero ideal of ${\mathcal P}$, and that $\phi({\mathcal P})\neq 0$. Then there is a minimal ideal $Z$ of ${\mathcal P}$ contained in $\phi({\mathcal P})$. But $\phi({\mathcal P})$ is nilpotent, by Corollary \ref{phi}, so $Z$ is a zero ideal. Thus $\phi({\mathcal P})\cap Zsoc({\mathcal P})\neq 0$. However, $M_i=(Z_1\oplus\ldots\oplus \hat{Z_i}\oplus\ldots\oplus Z_n)\dot{+}C$, where $\hat{Z_i}$ indicates a term that is missing, is a maximal subalgebra of ${\mathcal P}$ for each $1\leq i\leq n$. Hence
\[ \phi({\mathcal P})\subseteq \cap_{i=1}^n M_i \subseteq C,
\] and $\phi({\mathcal P})\cap Zsoc({\mathcal P}) =0$, a contradiction. It follows that $\phi({\mathcal P})=0$.
\end{proof}

\begin{theor}\label{t} Let $\mathcal{P}$ be a $\phi$-free Poisson algebra. Then $Zsoc(\mathcal{P})=N(\mathcal{P})=Ann_\mathcal{P}(Soc(\mathcal{P}))$.
\end{theor}
\begin{proof} Clearly, $Zsoc(\mathcal{P})\subseteq N(\mathcal{P})$. Let $B$ be a minimal ideal of $\mathcal{P}$ and let $N$ be a nilpotent ideal of $\mathcal{P}$. Then $B\cap N=0$ or $B$. If the former holds, then $B\cdot N+[N,B]\subseteq B\cap N=0$, so $N\subseteq Ann_\mathcal{P}(B)$. So suppose that the latter holds. Then $B\subseteq N$ and so $B\subseteq Ann_\mathcal{P}(N)$, by Lemma \ref{ann}, whence $B\cdot N+[N,B]=0$ again. It follows that $N(\mathcal{P})\subseteq Ann_\mathcal{P}(Soc(\mathcal{P}))$. It now suffices to show that $Ann_\mathcal{P}(Soc(\mathcal{P}))\subseteq Zsoc(\mathcal{P})$.
\par

There is a subalgebra $C$ of $\mathcal{P}$ such that $\mathcal{P}=Zsoc(\mathcal{P})\dot{+}C$, by Lemma \ref{splits}. Now, $Ann_\mathcal{P}(Soc(\mathcal{P}))\cap C$ is an ideal of $\mathcal{P}$ and so must contain a minimal ideal $D$. But $D$ must be a zero ideal, since it annihilates itself, by assumption. Hence $D\subseteq Zsoc(\mathcal{P})\cap C=0$ and the result is proved.
\end{proof}

\begin{theor}\label{phifree} Let ${\mathcal P}$ be a  Poisson algebra with radical $R$ and nilrdical $N$. Then ${\mathcal P}$ is $\phi$-free if and only if ${\mathcal P}=N\dot{+} U$ where $U$ is a subalgebra, $N=Zsoc({\mathcal P})$ and $R_A^2=0$. If $\mathbb{F}$ has characteristic zero, then $(U\cap R)^2=0$
\end{theor}
\begin{proof} Let ${\mathcal P}$ be $\phi$-free. Then ${\mathcal P}=Zsoc({\mathcal P})\dot{+} U$, where $U$ is a subalgebra of ${\mathcal P}$, by Theorem \ref{split}. Also, $N=N({\mathcal P})=Zsoc({\mathcal P})$, by Theorem \ref{t}. Now $(U\cap R)_A^2\subseteq R_A^2\subseteq N$, by Theorem \ref{square}, so $(U\cap R)_A^2=0$. Hence $R_A^2=N\cdot R$.  Let $Z$ be a minimal zero ideal of ${\mathcal P}$. Then $Z\cdot R$ is an ideal of ${\mathcal P}$. It follows that $Z\cdot R=Z$ or $0$. But the former is impossible, since $R$ is associative nilpotent. Thus $Z\cdot R=0$, whence $R_A^2=0$. The converse is given by Theorem \ref{split}.
\par

The characteristic zero result follows from Corollary \ref{csquare}.
\end{proof}

\begin{coro} Let ${\mathcal P}$ be a solvable Poisson algebra. Then ${\mathcal P}$ is $\phi$-free if and only if ${\mathcal P}=N\dot{+} U$ where ${\mathcal P}_A^2=0$ and ${\mathcal P}_L$ is any $\phi_L$-free solvable Lie algebra. If $F$ has characteristic zero, then $U^2=0$.
\end{coro}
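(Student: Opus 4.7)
The plan is to read this corollary off Theorem \ref{phifree} by specializing to $R=\mathcal{P}$ and then observing that a trivial associative multiplication identifies the Poisson Frattini ideal with the Lie Frattini ideal of $\mathcal{P}_L$.

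First I would note that, because $\mathcal{P}$ is solvable, $R=\mathcal{P}$, so Theorem \ref{phifree} says immediately that $\mathcal{P}$ is $\phi$-free if and only if $\mathcal{P}=N\dot{+}U$ with $N=Zsoc(\mathcal{P})$ and $\mathcal{P}_A^2=R_A^2=0$. The only piece of the stated equivalence not yet accounted for is the translation to $\phi_L$-freeness of $\mathcal{P}_L$.

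The main step is to establish that the condition $\mathcal{P}_A^2=0$ forces $\phi(\mathcal{P})=\phi_L(\mathcal{P}_L)$: since $\mathcal{P}\cdot\mathcal{P}=0$, every linear subspace of $\mathcal{P}$ is automatically closed under, and absorbing for, the associative product, so Poisson subalgebras of $\mathcal{P}$ coincide with Lie subalgebras of $\mathcal{P}_L$ and Poisson ideals of $\mathcal{P}$ coincide with Lie ideals of $\mathcal{P}_L$. Hence $F(\mathcal{P})=F_L(\mathcal{P}_L)$, and taking the largest ideal inside gives $\phi(\mathcal{P})=\phi_L(\mathcal{P}_L)$. Since solvability of $\mathcal{P}$ trivially passes to $\mathcal{P}_L$ (and conversely when $\mathcal{P}_A^2=0$, by an easy induction showing $\mathcal{P}^{(n)}=\mathcal{P}_L^{(n)}$ for $n\geq 1$), combining this identification with the first paragraph yields the advertised biconditional: $\mathcal{P}$ is $\phi$-free if and only if $\mathcal{P}=N\dot{+}U$ with $\mathcal{P}_A^2=0$ and $\mathcal{P}_L$ a $\phi_L$-free solvable Lie algebra.

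The characteristic zero addendum follows by quoting the char-$0$ clause of Theorem \ref{phifree}, namely $(U\cap R)^2=0$, and noting that $R=\mathcal{P}$ forces $U\cap R=U$, whence $U^2=0$. I do not anticipate a real obstacle; the only non-bookkeeping point is the identification $\phi(\mathcal{P})=\phi_L(\mathcal{P}_L)$ under $\mathcal{P}_A^2=0$, and that is immediate from the definitions.
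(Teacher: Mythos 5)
Your proposal is correct and follows essentially the same route as the paper: specialize Theorem \ref{phifree} to $R=\mathcal{P}$, then observe that $\mathcal{P}_A^2=0$ makes Poisson subalgebras and ideals coincide with Lie ones, so that $\phi(\mathcal{P})=\phi_L(\mathcal{P}_L)$ --- which is exactly the paper's remark that maximal subalgebras and maximal Lie subalgebras agree. The only cosmetic difference is the characteristic-zero addendum, where you read $U^2=(U\cap R)^2=0$ off Theorem \ref{phifree} directly while the paper cites an external result; both are immediate.
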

\begin{proof} The fact that ${\mathcal P}=N\dot{+} U$ and ${\mathcal P}_A^2=0$ follows immediately from Theorem \ref{phifree}. Clearly, $M$ is a maximal subalgebra of ${\mathcal P}$ if and only if it is a maximal Lie subalgebra of ${\mathcal P}$, so $\phi_L({\mathcal P})=0$.
\par

The fact that $U^2=0$ in characteristic zero follows from \cite[Theorem 7.5]{frat}.
\end{proof}

\begin{theor}\label{max} Let ${\mathcal P}$ be a Poisson algebra. The the following are equivalent:
\begin{itemize}
\item[(i)] ${\mathcal P}$ is nilpotent; and
\item[(ii)] $\phi({\mathcal P})={\mathcal P}^2$.
\end{itemize} Moreover, they imply that: (iii) all maximal subalgebras of ${\mathcal P}$ are ideals.
\end{theor}
\begin{proof} $(i) \Rightarrow (ii)$: Let $M$ be a maximal subalgebra of ${\mathcal P}$. Then there exists $n$ such that ${\mathcal P}^n\not \subseteq M$ but ${\mathcal P}^{n+1}\subseteq M$. Now ${\mathcal P}=M+{\mathcal P}^n$, so
\[ {\mathcal P}^2=M\cdot {\mathcal P}+[M,{\mathcal P}]+P^{n+1}\subseteq M,
\] whence ${\mathcal P}^2\subseteq \phi({\mathcal P})$. The reverse inclusion is clear.
\par

\noindent $(ii)\Rightarrow (i)$: We have that ${\mathcal P}/\phi({\mathcal P})$ is nilpotent, whence ${\mathcal P}$ is nilpotent, by Theorem \ref{factor}.
\par

\noindent $(ii)\Rightarrow (iii)$: This is clear.
\end{proof}
\medskip

{\bf Note that} the one-dimensional Poisson algebra with basis $e$ and products $e^2=e$, $[e,e]=0$ shows that (iii) does not imply either (i) or (ii). However, algebras satisfying (iii) are not far from being nilpotent, as we will show next.

\begin{lemma}\label{lie} If all maximal subalgebras of the Poisson algebra ${\mathcal P}$ are ideals, then ${\mathcal P}$ is Lie nilpotent.
\end{lemma}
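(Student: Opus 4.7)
The plan is to argue by contradiction and exploit the fact, established already in the paper, that Engel subalgebras $E_{\mathcal P}^L(a)$ are full Poisson subalgebras of $\mathcal{P}$, together with the Lie-algebraic normalizer result in Lemma \ref{norm}. Specifically, I will assume $\mathcal{P}$ is not Lie nilpotent and manufacture a maximal Poisson subalgebra which fails to be an ideal, contradicting the hypothesis.

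Here are the steps. First, apply Engel's theorem to the Lie algebra $\mathcal{P}_L$: if $\mathcal{P}_L$ is not nilpotent, then there exists $a \in \mathcal{P}$ such that $Q_a$ is not a nilpotent endomorphism of $\mathcal{P}$. Consequently $E_{\mathcal P}^L(a)$ is a proper subspace of $\mathcal{P}$, and we already know it is a Poisson subalgebra. Second, since $\mathcal{P}$ is finite-dimensional, enlarge $E_{\mathcal P}^L(a)$ to a maximal (Poisson) subalgebra $M$ of $\mathcal{P}$; note $M \ne \mathcal{P}$. Third, apply the hypothesis: $M$ is then a Poisson ideal of $\mathcal{P}$, so in particular $[\mathcal{P},M] \subseteq M$, which says $I_{\mathcal P}^L(M) = \mathcal{P}$. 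Fourth, apply Lemma \ref{norm}: since $M$ is (a fortiori) a Lie subalgebra containing $E_{\mathcal P}^L(a)$, we have $I_{\mathcal P}^L(M) = M$. The two equalities give $M = \mathcal{P}$, contradicting maximality of $M$. Hence $\mathcal{P}$ must be Lie nilpotent.

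I do not anticipate a significant obstacle. The only delicate point is that a maximal Poisson subalgebra need not be a maximal Lie subalgebra of $\mathcal{P}$, so one must not accidentally invoke Lemma \ref{norm} through a maximality-in-the-Lie-sense argument. The hypothesis of Lemma \ref{norm} is, however, only that $M$ be a Lie subalgebra containing $E_{\mathcal P}^L(a)$, both of which are immediate from the construction, so the argument goes through cleanly.
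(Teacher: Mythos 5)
Your proposal is correct and is essentially the paper's own argument: both take a maximal subalgebra $M$ containing a proper Engel subalgebra $E_{\mathcal P}^L(a)$, invoke Lemma \ref{norm} to get $I_{\mathcal P}^L(M)=M$, contradict the ideal hypothesis, and conclude via Engel's theorem. The only difference is that you phrase it contrapositively (starting from non-nilpotency) while the paper shows directly that $E_{\mathcal P}^L(x)={\mathcal P}$ for every $x$; the substance is identical.
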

\begin{proof} Let $x \in {\mathcal P}$ and suppose that $E_{\mathcal P}^L(x)\neq {\mathcal P}$. Let $M$ be a maximal subalgebra of ${\mathcal P}$ containing  $E_{\mathcal P}^L(x)$. Then  $I_{\mathcal P}^L(M)=M$, by Lemma \ref{norm}, contradicting the fact that $M$ is an ideal of ${\mathcal P}$. It follows that $E_{\mathcal P}^L(x)= {\mathcal P}$ for all $x\in {\mathcal P}$, and so ${\mathcal P}$ is Lie nilpotent, by Engel's Theorem.  
\end{proof}

\begin{theor} All maximal subalgebras of the Poisson algebra ${\mathcal P}$ are ideals if and only if ${\mathcal P}$ is nilpotent or ${\mathcal P} = Fe\oplus N$, where $e$ is an idempotent and $N$ is the nilradical of ${\mathcal P}$.
\end{theor}
\begin{proof} Suppose first that all maximal subalgebras of ${\mathcal P}$ are ideals. If ${\mathcal P}$ is associative nilpotent, then it is nilpotent, by Lemma \ref{lie} and Proposition \ref{nil}. So suppose that it isn't nilpotent. Then it has a principal idempotent $e$. Let ${\mathcal P}= e{\mathcal P}+(1-e){\mathcal P}$ be the Peirce decomposition of ${\mathcal P}_A$. (Note that we are not assuming that there is an identity element in ${\mathcal P}$; here $(1-e){\mathcal P}=\{x-e\cdot x \mid x\in {\mathcal P}\}$.)
\par

Let $x\in {\mathcal P}$. Then
\[ [e,x]=[e\cdot e,x]=2e\cdot[e,x]=4e\cdot[e,x],
\] so $[e,x]=0$, irrespective of the characteristic of $F$. Also,
\begin{align*}
[x-e\cdot x,y-e\cdot y]& =[x,y]-[x,e\cdot y]-[e\cdot x,y]+[e\cdot x,e\cdot y] \\
& =[x,y]+e\cdot [x,y]-e\cdot [x,y]+e\cdot [x,e\cdot y] \\
&=[x,y]-e\cdot [x,y]\in (1-e){\mathcal P}.
\end{align*}
As $Fe+(1-e){\mathcal P}$ is known to be a subalgebra of ${\mathcal P}_A$, it is a subalgebra of ${\mathcal P}$.
\par

Suppose that $K=Fe+(1-e){\mathcal P}\neq {\mathcal P}$ Then $K$ is contained in a maximal subalgebra $M$ of ${\mathcal P}$. Since $M$ is an ideal of ${\mathcal P}$, we have $x=e\cdot x + x - e\cdot x\in M$ for all $x\in {\mathcal P}$, a contradiction. Hence $K={\mathcal P}$.
\par

Now $(i-e){\mathcal P}$ is associative nilpotent, since $e$ is a principal idempotent, and hence is nilpotent, by Lemma \ref{lie}. Moreover, $e\cdot (x-e\cdot x) =0$ and $[e,x-e\cdot x]=0$, as before, so $(1-e){\mathcal P}=N$ and ${\mathcal P}=Fe\oplus N$.
\par

For the converse, simply note that maximal subalgebras of $Fe\oplus N$ are either $N$ or of the form $Fe\oplus T$ where $T$ is a maximal subalgebra of $N$ and use Theorem \ref{max}.
\end{proof}

\end{document}